\newtheorem{theorem}{Theorem}
\newtheorem{example}{Example}
\newtheorem{lemma}[theorem]{Lemma}
\newenvironment{proof}[1][Proof]{\textbf{#1.} }{\ \rule{0.5em}{0.5em}}
\begin{document}

\title{ On an infinite number of solutions to the  Diophantine equation  $ x^{n}+y^{p}=z^{q}$
 over the square integer matrices }
\author{By I. Kaddoura and B. Mourad }

\author{Issam Kaddoura,$^{\rm a}$   Bassam Mourad $^{\rm b}$  \\  $^{\rm a}${\em{ Faculty  of Arts and Science, Lebanese International
University, Saida, Lebanon }}\\ $^{\rm b}${\em{Department of Mathematics,  Faculty of Science, Lebanese University, Beirut, Lebanon}}}

\maketitle

\begin{abstract} In this paper, we use some extension of the Cayley-Hamilton
theorem to find  a family of matrices with integer entries that satisfy the non-linear Diophantine equation  $ x^{n}+y^{p}=z^{q}$
where $n,p$  and $q$ are arbitrary positive integers.
\end{abstract}

\section*{Main results} In \cite{km} we have proved the following  extension of the
Cayley-Hamilton theorem.
\begin{theorem}  Let $f(x,y)=$ $\sum_{k=0}^{n}a_{k}x^{k}y^{n-k}$  be any homogeneous polynomial in the two
variable x and y where all the
a$_{k}$ are complex and such that $a_{n}\neq0.$  Let $A$ be the companion matrix of $f(x,y)$, then for all complex numbers $r$ and $s$, we have
$
f(rA+s(\det A)I_{n},rI_{n}+sadjA)=0
$
 where $I_{n}$ is the identity and $adjA$
is the adjoint matrix of A.
\end{theorem}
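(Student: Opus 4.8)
The plan is to reduce the matrix identity to a scalar identity on eigenvalues and then pass from a generic case to the general case by a density argument. First I would record the structural fact that makes the statement even well-posed: since $A\,\mathrm{adj}A = \mathrm{adj}A\,A = (\det A)I_n$, the matrix $\mathrm{adj}A$ is a polynomial in $A$, so the two matrices $X := rA + s(\det A)I_n$ and $Y := rI_n + s\,\mathrm{adj}A$ commute. Hence $f(X,Y) = \sum_{k=0}^{n} a_k X^k Y^{n-k}$ is unambiguously defined, and any polynomial expression in $X$ and $Y$ can be simultaneously diagonalized whenever $A$ is diagonalizable.

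Next I would treat the generic case in which the coefficients $a_k$ are such that the dehomogenized polynomial $g(x) := f(x,1) = \sum_k a_k x^k$ has $n$ distinct nonzero roots $\lambda_1,\dots,\lambda_n$. Since $A$ is the companion matrix of $f$, its eigenvalues are exactly the $\lambda_i$, it is diagonalizable, and $\det A = \prod_i \lambda_i =: \mu \neq 0$. On the common eigenbasis, $\mathrm{adj}A = \mu A^{-1}$ has eigenvalues $\mu/\lambda_i$, so $X$ has eigenvalues $r\lambda_i + s\mu$ and $Y$ has eigenvalues $r + s\mu/\lambda_i$, and therefore $f(X,Y)$ has eigenvalues $f\bigl(r\lambda_i + s\mu,\; r + s\mu/\lambda_i\bigr)$.

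The heart of the argument is the observation that these scalar values all vanish. The point is the factorization $r\lambda_i + s\mu = \lambda_i\bigl(r + s\mu/\lambda_i\bigr)$, i.e. the first argument is $\lambda_i$ times the second. Writing $t_i := r + s\mu/\lambda_i$ and using that $f$ is homogeneous of degree $n$, one gets $f(\lambda_i t_i,\, t_i) = t_i^{\,n} f(\lambda_i,1) = t_i^{\,n}\, g(\lambda_i) = 0$, since each $\lambda_i$ is a root of $g$. Thus $f(X,Y)$ is diagonalizable with every eigenvalue equal to $0$, whence $f(X,Y)=0$ in the generic case. I expect this elementary homogeneity identity to be the decisive step; the only real subtlety is that the factorization is valid precisely because the ``companion'' structure ties $\det A$ and $\mathrm{adj}A$ to the roots $\lambda_i$.

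Finally I would remove the genericity hypotheses. The entries of the companion matrix $A$ are rational functions of $a_0,\dots,a_n$ with only powers of $a_n$ in the denominators, so every entry of $f(X,Y)$ is a rational function of $(a_0,\dots,a_n,r,s)$ that is regular on the set $\{a_n \neq 0\}$. Since coefficient tuples with distinct nonzero roots form a dense subset of this set and $f(X,Y)$ vanishes there, it vanishes identically for all admissible coefficients and all $r,s$. The main obstacle is therefore not the generic computation but confirming that the non-diagonalizable and singular cases (repeated roots, or $a_0=0$ so that $\det A=0$ and the tidy formula $\mathrm{adj}A=\mu A^{-1}$ fails) are genuinely covered; the density/continuity argument is exactly what closes that gap cleanly.
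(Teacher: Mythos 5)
Your proof is correct, but note first that the paper does not itself prove this statement: Theorem~1 is imported without proof from the authors' earlier work \cite{km}, so the only comparison available is with the natural direct argument. On its own terms, everything you wrote checks out: $X=rA+s(\det A)I_n$ and $Y=rI_n+s\,\mathrm{adj}A$ are both polynomials in $A$, hence commute and are simultaneously diagonalized in the generic case; the homogeneity computation $f(\lambda_i t_i,t_i)=t_i^{\,n}f(\lambda_i,1)=0$ is the decisive (and correct) step; and the passage from coefficients with distinct nonzero roots to all coefficients with $a_n\neq 0$, via continuity of the entries of $f(X,Y)$ on $\{a_n\neq 0\}$, is legitimate.

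However, your key factorization $r\lambda_i+s\mu=\lambda_i\bigl(r+s\mu/\lambda_i\bigr)$ is precisely the eigenvalue shadow of a matrix identity that proves the theorem in two lines and makes the genericity/density apparatus unnecessary. Since $A\,\mathrm{adj}A=\mathrm{adj}A\,A=(\det A)I_n$, one has
\[
AY=rA+sA\,\mathrm{adj}A=rA+s(\det A)I_n=X,
\]
and $A$ commutes with $Y$; hence
\[
f(X,Y)=\sum_{k=0}^{n}a_k(AY)^kY^{n-k}=\Bigl(\sum_{k=0}^{n}a_kA^k\Bigr)Y^{n}=f(A,I_n)\,Y^{n}=0,
\]
where the last equality is the Cayley--Hamilton theorem, because $f(x,1)$ equals $a_n$ times the characteristic polynomial of its companion matrix $A$. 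This route needs neither diagonalizability nor invertibility of $A$ (so repeated roots and $a_0=0$ require no separate treatment), and it works over any commutative ring in which $a_n$ is a unit, whereas your density step is genuinely tied to working over $\mathbb{C}$. In short: your proof is complete and correct, but more roundabout than the statement demands, and the lifting of your scalar identity to the matrix level is what closes the gap you flagged at the end.
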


For our purposes, we need the following well-known results concerning polynomials with integer coefficients.
\begin{lemma}  For any positive integer $k,$  the following relations hold:
\[
(x+y)^{6k+1}-x^{6k+1}-y^{6k+1}\equiv0\operatorname{mod}\allowbreak\left(
xy+x^{2}+y^{2}\right)  \ \ \ \ \ \ \ (1)
\]
\[
(x+y)^{6k+5}-x^{6k+5}-y^{6k+5}=0\operatorname{mod}\allowbreak\allowbreak
\left(  xy+x^{2}+y^{2}\right)  ^{2}\ \ \ \ \ \ (2)
\]
\[
\frac{(x+y)^{6k+1}-x^{6k+1}-y^{6k+1}}{xy+x^{2}+y^{2}}\equiv0\operatorname{mod}%
\allowbreak f(x,y)\ \ \ \ \ \ \ \ \ \ \ \ \ \ \ \ \ \ (3)
\]
\[
\frac{(x+y)^{6k+5}-x^{6k+5}-y^{6k+5}}{xy+x^{2}+y^{2}}\equiv0\operatorname{mod}%
\allowbreak g(x,y)\ \ \ \ \ \ \ \ \ \ \ \ \ \ \ \ \ \ (4)
\]
\[
\frac{(x+y)^{6k+5}-x^{6k+5}-y^{6k+5}}{\left(  xy+x^{2}+y^{2}\right)  ^{2}%
}\equiv0\operatorname{mod}\allowbreak
h(x,y)\ \ \ \ \ \ \ \ \ \ \ \ \ \ \ \ \ \ (5)
\]
where $\allowbreak f(x,y)$, $g(x,y)$, and $ h(x,y)$ are polynomials
with integer coefficients of order $6k-1,6k+3,$ and $6k+1$ respectively.
\end{lemma}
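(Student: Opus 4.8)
The plan is to reduce every one of the five congruences to a finite computation at a root of the quadratic $\Phi(x,y)=x^{2}+xy+y^{2}$. Over $\mathbb{C}$ one has the factorization $\Phi(x,y)=(x-\omega y)(x-\omega^{2}y)$, where $\omega=e^{2\pi i/3}$ is a primitive cube root of unity, so that $\omega^{3}=1$, $1+\omega+\omega^{2}=0$, and hence $1+\omega=-\omega^{2}$. Writing $P_{m}(x,y)=(x+y)^{m}-x^{m}-y^{m}$, which is homogeneous of degree $m$ with integer coefficients, divisibility of $P_{m}$ by $\Phi$ is equivalent to the single scalar identity $p_{m}(\omega)=0$, where $p_{m}(t)=P_{m}(t,1)=(t+1)^{m}-t^{m}-1$ is the dehomogenization. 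Here I would use that $\Phi$ is squarefree and that $P_{m}$ has real (indeed integer) coefficients, so the conjugate root $\omega^{2}=\bar\omega$ is handled automatically and only one root need be tested.

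First I would verify that $\Phi$ divides $P_{m}$ whenever $m\equiv 1$ or $m\equiv 5\pmod 6$, which underlies (1) and the first-power denominators in (3) and (4). Substituting $t+1\mapsto-\omega^{2}$ at $t=\omega$ gives $p_{m}(\omega)=(-\omega^{2})^{m}-\omega^{m}-1=(-1)^{m}\omega^{2m}-\omega^{m}-1$. Since $\omega^{j}$ depends only on $j\bmod 3$ while $(-1)^{m}$ depends only on $m\bmod 2$, the value of $p_{m}(\omega)$ depends only on $m\bmod 6$; a direct evaluation in the classes $m\equiv 1$ and $m\equiv 5$ collapses, via $1+\omega+\omega^{2}=0$, to $p_{m}(\omega)=0$.

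For the second-power statements I would pass to the multiplicity of $\omega$ as a root. After dehomogenization $\Phi^{2}=(t-\omega)^{2}(t-\omega^{2})^{2}$, so divisibility of $P_{m}$ by $\Phi^{2}$ is equivalent to $\omega$ being at least a double root of $p_{m}$, i.e. to the two conditions $p_{m}(\omega)=0$ and $p_{m}'(\omega)=0$. Since $p_{m}'(t)=m\big[(t+1)^{m-1}-t^{m-1}\big]$, one has $p_{m}'(\omega)=m\big[(-\omega^{2})^{m-1}-\omega^{m-1}\big]$, whose vanishing again depends only on $m\bmod 6$. Carrying out this derivative test — tracking the sign $(-1)^{m-1}$ and the exponent of $\omega$ modulo $3$ simultaneously — is the heart of the argument and the step most prone to error: it determines the exact power of $\Phi$ dividing $P_{m}$ in each residue class, and this is precisely what separates the $\Phi$ from the $\Phi^{2}$ behaviour in (2) and in the $\Phi^{2}$ denominator of (5). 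I expect this multiplicity bookkeeping to be the main obstacle.

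Finally I would establish that the quotients $f,g,h$ have integer coefficients and the stated degrees. Regarded as a polynomial in $x$ with coefficients in $\mathbb{Z}[y]$, the divisor $\Phi$ (resp. $\Phi^{2}$) is monic of degree $2$ (resp. $4$), so the division algorithm in $\mathbb{Z}[y][x]$ produces a quotient and remainder already lying in $\mathbb{Z}[x,y]$; the root analysis above forces the remainder to vanish, so the quotient is an honest element of $\mathbb{Z}[x,y]$, homogeneous of degree $\deg P_{m}-\deg(\text{divisor})$. This gives degrees $6k-1$, $6k+3$, and $6k+1$ in (3), (4), and (5) respectively. Relations (3) and (4) then hold immediately, since $f$ and $g$ are exactly these integer quotients (of degree equal to the quotient itself), while (5) additionally invokes the double-root fact from the previous paragraph to guarantee that $\Phi^{2}$ really does divide before the quotient $h$ can be named.
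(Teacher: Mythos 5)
Your root-multiplicity strategy is the natural and correct one here (the paper itself offers no argument for this lemma, asserting it as ``well-known,'' so there is no proof of record to compare against), and it does settle (1), (3) and (4): with $\Phi=x^{2}+xy+y^{2}$, $\omega=e^{2\pi i/3}$ and $p_{m}(t)=(t+1)^{m}-t^{m}-1$ as in your write-up, the evaluation $p_{m}(\omega)=(-\omega^{2})^{m}-\omega^{m}-1=0$ for $m\equiv\pm1\pmod 6$, combined with monic division in $\mathbb{Z}[y][x]$, gives first-power divisibility with an integer quotient of the stated degree. But the step you explicitly postponed --- the derivative test --- is not routine bookkeeping that can be left for later; it is the decisive computation, and when you carry it out it comes out \emph{against} the lemma. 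For odd $m$ one has $p_{m}'(\omega)=m\bigl[(-\omega^{2})^{m-1}-\omega^{m-1}\bigr]=m\bigl[\omega^{2(m-1)}-\omega^{m-1}\bigr]$. If $m=6k+1$ then $m-1\equiv 0\pmod 6$, both powers of $\omega$ equal $1$, and $p_{m}'(\omega)=0$, so $\omega$ is a double root and $\Phi^{2}$ divides $(x+y)^{6k+1}-x^{6k+1}-y^{6k+1}$. If $m=6k+5$ then $m-1\equiv 4\pmod 6$ and $p_{m}'(\omega)=m(\omega^{2}-\omega)\neq 0$, so $\omega$ is only a simple root and $\Phi^{2}$ does \emph{not} divide $(x+y)^{6k+5}-x^{6k+5}-y^{6k+5}$.

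In other words, claims (2) and (5) are false for every positive integer $k$: the two residue classes in the lemma are interchanged. The correct dichotomy is that $\Phi^{2}$ divides $(x+y)^{6k+1}-x^{6k+1}-y^{6k+1}$, while $\Phi$ divides $(x+y)^{6k+5}-x^{6k+5}-y^{6k+5}$ exactly once; this matches the classical identities $(x+y)^{7}-x^{7}-y^{7}=7xy(x+y)(x^{2}+xy+y^{2})^{2}$ and $(x+y)^{5}-x^{5}-y^{5}=5xy(x+y)(x^{2}+xy+y^{2})$. So no proof can close this gap; the statement itself must be repaired, either by swapping the exponents $6k+1$ and $6k+5$ in (2) and (5), or by weakening (2) to divisibility by $\Phi$ alone. (Either repair leaves the paper's main theorem intact, since its matrix argument uses only first-power divisibility, which holds for all exponents $\equiv\pm1\pmod 6$.) Your final paragraph on integrality and degrees of the quotients is sound, and with the exponents corrected it gives quotient degrees $6k-1$ and $6k+3$ for the $\Phi$-quotients in (3) and (4), but degree $6k-3$ (not $6k+1$) for the legitimate $\Phi^{2}$-quotient, which belongs to the exponent $6k+1$.
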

Now using the preceding lemma and theorem, we prove the following main result.
\begin{theorem} The non-linear Diophantine equation  $ x^{n}+y^{p}=z^{q}$
where $n,p$  and $q$ are arbitrary positive integers, have an infinite
number of non trivial solutions over the square integer matrices.
\end{theorem}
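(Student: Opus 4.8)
The plan is to manufacture, from Theorem 1, a family of commuting integer matrices on which the polynomial identities of Lemma 2 collapse into the desired additive relation. First I would apply Theorem 1 to the homogeneous polynomial $\Phi(x,y)=x^{2}+xy+y^{2}$ (here $a_{2}=a_{1}=a_{0}=1$, so the hypothesis $a_{2}\neq0$ holds). If $A$ denotes its companion matrix, Theorem 1 produces, for every choice of integers $r,s$, the two matrices
\[
X=rA+s(\det A)I,\qquad Y=rI+s\,\mathrm{adj}\,A ,
\]
each with integer entries, satisfying $\Phi(X,Y)=X^{2}+XY+Y^{2}=0$. Since $X$ and $Y$ are both polynomials in the single matrix $A$, they commute; this commutativity is exactly what will let me substitute them into the scalar identities of Lemma 2.

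Next I would exploit Lemma 2. Because $\Phi(x,y)$ divides $(x+y)^{m}-x^{m}-y^{m}$ for every $m\equiv\pm1\pmod 6$ (relations (1) and (2)), there is a factorisation $(x+y)^{m}-x^{m}-y^{m}=\Phi(x,y)\,Q_{m}(x,y)$ with $Q_{m}$ having integer coefficients. Evaluating this identity at the commuting pair $(X,Y)$ and using $\Phi(X,Y)=0$ gives
\[
(X+Y)^{m}=X^{m}+Y^{m}\qquad\text{for every } m \text{ coprime to } 6 .
\]
Writing $W=X+Y$, this already furnishes, for each such exponent, a genuinely non-scalar integer solution of the Fermat equation $X^{m}+Y^{m}=W^{m}$, which is the phenomenon driving the whole result. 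To reach the general equation $x^{n}+y^{p}=z^{q}$ I would choose an exponent $m$ divisible by $n,p,q$ and set $A_{0}=X^{m/n}$, $B_{0}=Y^{m/p}$, $C_{0}=W^{m/q}$, so that $A_{0}^{\,n}+B_{0}^{\,p}=X^{m}+Y^{m}=W^{m}=C_{0}^{\,q}$. Infinitely many solutions then come for free: letting $r,s$ range over $\mathbb{Z}$, or rescaling a fixed solution by $t^{L}$ with $L=\mathrm{lcm}(n,p,q)$ and $t=1,2,3,\dots$, produces infinitely many pairwise distinct matrices which for generic parameters are not scalar, so the solutions are nontrivial.

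The hard part is the arithmetic constraint hidden in the previous step: the relation $(X+Y)^{m}=X^{m}+Y^{m}$ is available only for $m$ coprime to $6$, so an $m$ divisible by all of $n,p,q$ exists only when $\mathrm{lcm}(n,p,q)$ is itself coprime to $6$. Handling target exponents that are divisible by $2$ or by $3$ is therefore the genuine obstacle, and it is here that the finer information in Lemma 2 should be used: the congruence modulo $\Phi^{2}$ in (2) and the quotient divisibilities (3)--(5), together with the matrices obtained by applying Theorem 1 to the companion matrices of $f,g,h$ and of $\Phi$ itself, are designed to supply the missing square and cube factors. I would complete the proof by combining such an auxiliary commuting system with the construction above, and, should a single uniform construction prove awkward, by treating the factors $2$ and $3$ in the exponents separately and then splicing the partial solutions through the commuting (or block) structure.
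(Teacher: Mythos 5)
Your proposal is essentially the paper's own proof. The paper applies Theorem 1 to $f(x,y)=xy+x^{2}+y^{2}$, obtains the commuting integer pair $X=rA+s(\det A)I$, $Y=rI+s\,\mathrm{adj}A$ satisfying $X^{2}+XY+Y^{2}=0$, uses relations (1) and (2) to conclude $X^{m}+Y^{m}=(X+Y)^{m}$ for $m\equiv\pm1\pmod 6$, and then regroups $x^{npq}+y^{npq}=z^{npq}$ as $(x^{pq})^{n}+(y^{nq})^{p}=(z^{np})^{q}$ --- exactly your ``choose $m$ divisible by $n,p,q$'' step. Concerning the obstacle you honestly flag (target exponents divisible by $2$ or $3$): you should know that the paper does not overcome it either; its proof explicitly restricts to $n,p,q$ odd with $npq$ of the form $6k+1$ or $6k+5$ (equivalently, all three exponents coprime to $6$), and its appeal to relations (3)--(5) only enlarges the matrix sizes to $(6k-1)\times(6k-1)$, $(6k+3)\times(6k+3)$ and $(6k+1)\times(6k+1)$, without widening the set of admissible exponents, so the theorem as stated for arbitrary positive $n,p,q$ is not actually established by the paper's argument. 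Even the paper's Example 3, which features the even exponent $10$, rests on the ad hoc circumstance that the chosen base matrix is a perfect square in the commutative ring $\mathbb{Z}[A]$ (indeed $\left[\begin{array}{cc}8 & -5\\ 5 & 3\end{array}\right]=\left[\begin{array}{cc}3 & -1\\ 1 & 2\end{array}\right]^{2}$), not on any general construction; so your identification of this gap is accurate, and no idea in the paper fills it.
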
 
\begin{proof} Let $f(x,y)=xy+x^{2}+y^{2}$ and using the above theorem,
we obtain%
\begin{align*}
f(rA+(\det A)I,rI+sadjA)  & =f(r\left[
\begin{array}
[c]{cc}%
0 & -1\\
1 & -1
\end{array}
\right]  +sI,rI+s\left[
\begin{array}
[c]{cc}%
-1 & 1\\
-1 & 0
\end{array}
\right]  )\\
& =f(\left[
\begin{array}
[c]{cc}%
s & -r\\
r & -r+s
\end{array}
\right]  \allowbreak,\left[
\begin{array}
[c]{cc}%
r-s & s\\
-s & r
\end{array}
\right]  )=\allowbreak\left[
\begin{array}
[c]{cc}%
0 & 0\\
0 & 0
\end{array}
\right].
\end{align*}
Now using identities (1) and (2) in the preceding lemma, we get:
\[
\left[
\begin{array}
[c]{cc}%
r & -r+s\\
r-s & s
\end{array}
\right]  ^{6k+1}-\left[
\begin{array}
[c]{cc}%
s & -r\\
r & -r+s
\end{array}
\right]  ^{6k+1}-\left[
\begin{array}
[c]{cc}%
r-s & s\\
-s & r
\end{array}
\right]  ^{6k+1}=0\ \text{for all }k\in \mathbb{N}\text{ and }r,s\in \mathbb{Z}.
\]
Similarly, we have
\[
\left[
\begin{array}
[c]{cc}%
r & -r+s\\
r-s & s
\end{array}
\right]  ^{6k+5}-\left[
\begin{array}
[c]{cc}%
s & -r\\
r & -r+s
\end{array}
\right]  ^{6k+5}-\left[
\begin{array}
[c]{cc}%
r-s & s\\
-s & r
\end{array}
\right]  ^{6k+5}=0\text{ for all }k\in \mathbb{N}\text{ and }r,s\in \mathbb{Z}.
\]
By choosing $r$ and $s$ any arbitrary integers, we obtain an infinite family of $2\times 2$
 integer matrices which are solutions to the
equation $\ A^{n}+B^{p}=C^{q}$ where $n, p$ and $q$ are odd integers such that 
$npq$ has the form $6k+1$ or $6k+5.$  Then in this case,
$x^{npq}+y^{npq}=z^{npq}$ can be written as $(x^{np})^{q}+(y^{pq})^{n}%
=(z^{nq})^{p}$ to obtain the required solution.
Next, notice that by applying the preceding theorem and taking into account (3), (4) and (5) of the preceding lemma, we can extend
the family of solutions into matrices of sizes $(6k-1)\times(6k-1),(6k+3)\times(6k+3),$ and
$(6k+1)\times(6k+1)$ respectively.
\end{proof}

\begin{example}
 Consider the following examples.
\begin{enumerate}
\item $ \left[
\begin{array}
[c]{cc}%
s & -r\\
r & -r+s
\end{array}
\right]  ^{31}+\left[
\begin{array}
[c]{cc}%
r-s & s\\
-s & r
\end{array}
\right]  ^{31}\allowbreak\allowbreak=\left[
\begin{array}
[c]{cc}%
r & -r+s\\
r-s & s
\end{array}
\right]  ^{31}$

\item $ \left[
\begin{array}
[c]{cc}%
-54 & 27\\
-27 & -27
\end{array}
\right]  ^{11}+\left[
\begin{array}
[c]{cc}%
-1 & 2\\
-2 & 1
\end{array}
\right]  ^{77}=\left[
\begin{array}
[c]{cc}%
486 & -243\\
243 & 243
\end{array}
\right]  ^{7}$
\item $\left[
\begin{array}
[c]{cc}%
62 & -149\\
149 & -87
\end{array}
\right]  ^{10}+\left[
\begin{array}
[c]{cc}%
-3 & 8\\
-8 & 5
\end{array}
\right]  ^{25}=\ \ \allowbreak\left[
\begin{array}
[c]{cc}%
-14\,632 & 18\,357\\
-18\,357 & 3725
\end{array}
\right]  ^{5}.$
\end{enumerate}
\end{example}

\end{document}